\newtheorem{theorem}{Theorem}[section]
\newtheorem{prop}[theorem]{Proposition}
\newtheorem{claim}[theorem]{Claim}
\newtheorem{fact}[theorem]{Fact}
\newtheorem{lemma}[theorem]{Lemma}
\theoremstyle{definition}
\newtheorem{defn}[theorem]{Definition}
\theoremstyle{remark}
\newtheorem{remark}[theorem]{Remark}
\newcommand{\WM}{\widetilde{\cal M}}
\newcommand{\la}{\langle}
\newcommand{\ra}{\rangle}
\newcommand{\sub}{\subseteq}
\newcommand{\dcl}{\operatorname{dcl}}
\newcommand{\ldim}{\ensuremath{\textup{ldim}}}
\newcommand{\bb}[1]{\ensuremath{\mathbb{#1}}}
\newcommand{\cal}[1]{\ensuremath{\mathcal{#1}}}
\newcommand{\Cal}[1]{\ensuremath{\mathcal{#1}}}
\newcommand{\rarr}{\ensuremath{\rightarrow}}
\newcommand{\res}{\ensuremath{\upharpoonright}}
\newcommand{\sm}{\setminus}
\title[Product cones]
{Product cones in dense pairs}
\subjclass[2010]{Primary 03C64,  Secondary 06F20}
\keywords{dense pairs, product cone decomposition}
\date{\today}
\begin{document}

\author {Pantelis  E. Eleftheriou}

\address{Department of Mathematics and Statistics, University of Konstanz, Box 216, 78457 Konstanz, Germany}

\email{panteleimon.eleftheriou@uni-konstanz.de}

\thanks{Research supported by a Research Grant from the German Research Foundation (DFG) and a Zukunftskolleg Research Fellowship.}

\begin{abstract} Let $\cal M=\la M, <, +, \dots\ra$ be an o-minimal expansion of an ordered group, and $P\sub M$ a dense set such that certain tameness conditions hold. We introduce the notion of  a \emph{product cone} in $\widetilde{\cal M}=\la \cal M, P\ra$, and prove: if $\cal M$ expands a real closed field, then $\WM$ admits a product cone decomposition. If $\cal M$ is linear, then it does not. In particular, we settle a question from \cite{egh}.



\end{abstract}

\date{\today}

 \maketitle

\section{Introduction}

Tame expansions $\widetilde{\cal M}=\la \cal M, P\ra$ of an o-minimal structure $\cal M$ by a set $P\sub M$ have received lots of attention in recent literature (\cite{bz, beg, bh,dms1,vdd-dense,dg,gh, ms}). One important category is when every definable open set is already definable in $\cal M$.  Dense pairs and expansions of \cal M by a dense independent set or by a dense multiplicative group with the Mann Property are of this sort. 
In \cite{egh}, all these examples were put under a common perspective and a cone decomposition theorem was proved for their definable sets and functions. This theorem provided an analogue of the cell decomposition theorem for o-minimal structures in this context, and was inspired by the cone decomposition theorem established for semi-bounded o-minimal structures  (\cite{ed-str, el-sbdgroups, pet-str}). The central notion is that of a \emph{cone}, and, as its definition in \cite{egh} appeared to be quite technical, in \cite[Question 5.14]{egh}, we asked whether it can be simplified in two specific ways. In this paper we refute both ways in general, showing that the definition in \cite{egh} is optimal, but prove that if $\cal M$ expands a real closed field, then  a \emph{product cone} decomposition theorem does hold.

In Section \ref{sec-prelim}, we provide all necessary background and definitions. For now, let us only point out  the difference between product cones and  cones, and state our  main theorem. Let $\cal M=\la M, <, +, \dots\ra$ be an o-minimal expansion of an ordered group in the language $\cal L$, and $\widetilde{\cal M}=\la \cal M, P\ra$  an expansion of $\cal M$ by a set $P\sub M$ such that certain tameness conditions hold (those are listed in Section \ref{sec-prelim}). For example, $\widetilde{\cal M}$ can be a dense pair (\cite{vdd-dense}), or $P$ can be a dense independent set (\cite{dms2}) or a multiplicative group with the Mann Property (\cite{dg}). By `definable' we mean `definable in $\WM$, and by $\cal L$-definable we mean `definable in \cal M'.
The notion of a \emph{small} set is given in Definition \ref{def-small} below, and it is equivalent to the classical notion of being $P$-internal from geometric stability theory (\cite[Lemma 3.11 and Corollary 3.12]{egh}). A \emph{supercone} generalizes the notion of being co-small in an interval (Definition \ref{def-supercone}). Now, and roughly speaking, a cone is then defined as a set of the form
$$h\left(\bigcup_{g\in S} \{g\}\times J_g\right),$$
where $h$ is an $\cal L$-definable continuous map with each $h(g, -)$  injective, $S\sub M^m$ is a small set, and $\{J_g\}_{g\in S}$ is a definable family of supercones. In Definition \ref{def-cone} below, we call a cone a \emph{product cone} if we can replace the above family  $\{J_g\}_{g\in S}$ by a product $S\times J$. That is, $C$ has the form
$$h(S\times J),$$
with $h$ and $S$ as above and $J$ a supercone.
Let us say that $\WM$ \emph{admits a product cone decomposition} if every definable set is a finite union of product cones. Our main theorem below asserts whether $\WM$ admits a product cone decomposition or not based solely  on assumptions on $\cal M$.




\begin{theorem}\label{main1}   $ $
\begin{enumerate}
\item If $\cal M$ is linear, the $\WM$ does not admit a product cone decomposition.
\item If $\cal M$ expands a real closed field, then $\WM$ admits a product cone decomposition theorem.
\end{enumerate}
\end{theorem}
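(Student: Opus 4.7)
The two parts of the theorem exploit opposite features of the linearity versus field hypothesis on $\cal M$.

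\emph{Part (1).} The plan is to exhibit a single $\WM$-definable set that is a cone in the sense of \cite{egh} but cannot be written as a finite union of product cones. A natural candidate is
$$C \;=\; \{(g,x)\in M^2 : g\in S,\ 0<x<g\},$$
where $S\sub P$ is an unbounded set of positive elements. The set $C$ is manifestly a cone, witnessed by $h=\mathrm{id}$ and the definable family of supercones $J_g=(0,g)$. Suppose, toward a contradiction, that $C=\bigcup_{i=1}^N h_i(S_i\times J_i)$ is a finite union of product cones. Because $\cal M$ is linear, each continuous $\cal L$-definable map $h_i$ is piecewise $\Q$-affine, and a fiber analysis shows that the interior of the fiber of $h_i(S_i\times J_i)$ over any $g\in M$ has length bounded independently of $g$: on each $\Q$-affine piece, the fiber either reduces to a small-set condition on the parameter $s$ (hence has empty interior, since small sets have empty interior by tameness) or reduces to $t\mapsto \beta_i(s_0,t)$ for a fixed $s_0$, giving a translate of a fixed $\Q$-linear image of $J_i$ whose length does not depend on $g$; containment in $(0,g)$ further forces $J_i$ itself to be bounded. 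Thus the interior of the fiber of $\bigcup_i h_i(S_i\times J_i)$ over $g$ has length bounded by some constant $L$, whereas the fiber of $C$ over $g\in S$ is $(0,g)$, of length $g$. Choosing $g\in S$ with $g>L$ gives the contradiction.

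\emph{Part (2).} Here the plan is to combine the cone decomposition theorem of \cite{egh} with a uniform $\cal L$-definable trivialization of supercone families, which is available in a real closed field. Given any definable $X$, \cite{egh} expresses $X$ as a finite union of cones, so it suffices to refine a single cone $C=h\bigl(\bigcup_{g\in S}\{g\}\times J_g\bigr)$ into a finite union of product cones. I would proceed in two steps. First, using $\cal L$-cell decomposition in the parameter space $M^m$, I partition $S$ into finitely many $\cal L$-definable pieces so that on each piece the family $\{J_g\}$ has a uniform $\cal L$-cell structure, with the same $\cal L$-definable bounding functions. Second, on each piece I construct an $\cal L$-definable family of homeomorphisms $\Phi_g\colon J\to J_g$ from a fixed reference supercone $J$, built coordinate by coordinate by affine rescaling: an interval of the form $a_g(t)<x<b_g(t)$ is matched with $(0,1)$ via $u\mapsto a_g(t)+u\bigl(b_g(t)-a_g(t)\bigr)$. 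The field multiplication is essential here, and this is the step that fails in the linear setting of Part (1). It remains to verify that the image of the reference supercone under $\Phi_g$ is again a supercone; this should follow because supercones are characterized (Definition \ref{def-supercone}) by a generic co-smallness condition inside an $\cal L$-cell, a property preserved under $\cal L$-definable cell homeomorphisms.

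With $\Phi_g$ in hand, setting $h'(g,u):=h\bigl(g,\Phi_g(u)\bigr)$ exhibits the corresponding piece of $C$ as the product cone $h'(S'\times J)$, and taking the union over pieces of $S$ yields the product cone decomposition. The main obstacle I anticipate is the construction of $\Phi_g$ while staying within the class of supercones rather than merely within $\cal L$-cells. This likely requires an inductive construction on the dimension of the ambient cell, using the precise definition of supercone to ensure that generic co-smallness is preserved at each stage of the rescaling.
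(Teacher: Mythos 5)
Your proposal has genuine gaps in both parts.

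\emph{Part (1).} Your counterexample (fibers $(0,g)$ of unbounded length over an unbounded small $S$) is in the same spirit as the paper's, which instead uses $X=\bigcup_{a\in S^{>0}}\{a\}\times(0,a)$ with $0$ in the interior of $cl(S)$, so that the fibers shrink to length $0$. But your impossibility argument breaks at its central step: the claim that the fiber of a product cone $h_i(S_i\times J_i)$ over $g$ has interior of length bounded independently of $g$. In the relevant case (first coordinate of $h_i$ constant in $t$ on an affine piece), that fiber is a union $\bigcup_{s}\bigl(\lambda_2 s+c_2+\mu_2 J_i\bigr)$ of translates of a fixed co-small set, taken over a small set of parameters $s$. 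Each translate lies in an interval of fixed length, but the translates can be spread over an arbitrarily long region, and whether such a union can cover a long interval is precisely the difficulty of the problem --- you give no argument. Note also that each individual fiber may have empty interior while the union of the $N$ fibers is a full interval, so bounding interiors piece by piece bounds nothing. The paper instead argues in the limit: it chooses an affine curve $\gamma$ in the parameter space along which the fibers of $X$ shrink to a point, extends the affine map $H_2(\gamma(-),-)$ continuously to the closed box, and uses Lemma \ref{H} to show the limiting fiber still has positive length, a contradiction. Some such limit or uniformity argument is needed; as written, your length bound is unjustified.

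\emph{Part (2).} The plan to construct homeomorphisms $\Phi_g\colon J\to J_g$ from a fixed reference supercone onto each fiber cannot work. Your $\Phi_g$ is an affine rescaling built from the $\cal L$-definable bounding functions of the cells, so it matches the convex hulls (shell fibers), but the supercones $J_g$ differ from one another \emph{inside} those hulls by which small set has been removed, and that variation is not governed by any $\cal L$-definable data. Hence $\Phi_g(J)\neq J_g$ in general, and $h'(S\times J)$ is not the original cone. The paper's Lemma \ref{Lfact} carries out exactly your rescaling step (normalizing every shell fiber to $(0,1)^k$), but then, rather than matching fibers, it sets $J=\bigcap_{g\in S}J'_g$, which is still co-small by \cite[Lemma 4.25]{egh}; the product cone $S\times J$ is \emph{contained in} $\cal J'$, and the leftover $\cal J'\setminus(S\times J)$ has strictly smaller large dimension by \cite[Lemma 4.29]{egh}, so the proof closes by induction on large dimension. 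Your proposal is missing both the intersection idea and the induction that absorbs the leftover piece.
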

Theorem \ref{main1}(1), in particular, refutes \cite[Question 5.14(2)]{egh}. \cite[Question 5.14(1)]{egh}  further asked whether one can define a supercone as a product of  co-small sets in intervals, and still obtain a cone decomposition theorem. In Proposition \ref{main2} we also refute that question in general, by constructing a counterexample whenever $\cal M$ expands a real closed field.

\begin{remark} Theorem \ref{main1} deals with the two main categories of o-minimal structures; namely,  $\cal M$ is linear or it expands a real closed field. In the `intermediate', semi-bounded case (\cite{el-sbdgroups}), where $\cal M$ defines a field on a bounded interval but not on the whole of $M$, the answer to \cite[Question 5.14]{egh} is rather unclear.  Indeed, in the presence of two different notions of cones in this setting, the semi-bounded cones (from \cite{el-sbdgroups}) and the current ones,  the methods in Sections \ref{sec-linear} and \ref{sec-field} do not seem to apply and a new approach is needed.
\end{remark}



\subsection*{Notation} The topological closure of a set $X\sub M^n$ is denoted by $cl(X).$ 
Given any subset $X\subseteq M^m \times M^n$ and $a\in M^n$, we write $X_a$ for
\[
\{ b \in M^m \ : \ (b,a) \in X\}.
\]
If $m\le n$, then $\pi_m:M^n\to M^m$ denotes the projection onto the first $m$ coordinates. We write $\pi$ for $\pi_{n-1}$, unless stated otherwise.
A family $\cal J=\{J_g\}_{g\in S}$ of sets is called definable if $\bigcup_{g\in S}\{g\}\times J_g$ is definable.
We often identify $\cal J$ with $\bigcup_{g\in S}\{g\}\times J_g$.


$ $\\
\noindent\textbf{Acknowledgments.} I thank Philipp Hieronymi for several discussions on this topic.


\section{Preliminaries}\label{sec-prelim}

In this section we lay out all  necessary background and terminology. Most of it is extracted from \cite[Section 2]{egh}, where the reader is referred to for an extensive account. 
We fix an o-minimal theory $T$ expanding the theory of ordered abelian groups with a distinguished positive element $1$. We denote by $\Cal L$ the language of $T$ and by $\Cal L(P)$ the language $\Cal L$ augmented by a unary predicate symbol $P$. Let $\widetilde{T}$ be an $\Cal L(P)$-theory extending $T$. If $\cal M=\la M, <, +, \dots\ra\models T$, then $\widetilde{\cal M}=\la \cal M, P\ra$ denotes an expansion of \cal M that models $\widetilde{T}$. By `$A$-definable' we mean `definable in $\widetilde{\cal M}$ with parameters from $A$'. By `$\cal L_A$-definable' we mean `definable in \cal M with parameters from $A$'. We omit the index $A$ if we do not want to specify the parameters.
For a subset $A\subseteq M$, we write $\dcl(A)$ for the definable closure of $A$ in $\Cal M$, and for an $\cal L$-definable set $X\sub  M^n$, we write $\dim(X)$  for the corresponding pregeometric dimension. The following definition is taken essentially from \cite{dg}.


\begin{defn}\label{def-small}
 Let $X\sub M^n$ be a definable set. We call $X$ \emph{large} if there is some $m$ and an $\cal L$-definable function $f:M^{nm}\to M$ such that $f(X^m)$ contains an open interval in $M$. We call $X$ \emph{small} if it is not large.
\end{defn}

Consider the following properties.\vskip.2cm

\noindent\textbf{Tameness Conditions (\cite{egh}):}
\begin{itemize}

\item [(I)]  $P$ is small.

\item [(II)] Every $A$-definable set $X\sub M^n$ is a boolean combination of sets of the form
\[
\{x\in M^n: \exists z\in P^m \varphi(x, z)\},
\]
 where $\varphi(x, z)$ is an $\cal L_A$-formula.

\item [(III)] (Open definable sets are $\cal L$-definable) For every parameter set $A$ such that $A\setminus P$ is $\dcl$-independent over $P$, and for every $A$-definable set $V \subset M^s$, its topological closure $cl(V)\subseteq M^{s}$ is $\cal L_A$-definable.
\end{itemize}

\noindent\textbf{From now on, we assume that  every model $\WM\models \widetilde T$  satisfies Conditions (I)-(III) above. We fix a sufficiently saturated model $\widetilde{\cal M}=\la \cal M, P\ra\models \widetilde T$.}\vskip.2cm

We next turn to define the central notions of this paper. As mentioned in the introduction, the notion of a cone is based on that of a supercone, which in its turn generalizes the notion of being co-small in an interval. Both notions, supercones and cones, are unions of specific families of sets, which not only are definable, but they are so in a very uniform way.

\begin{defn}[\cite{egh}]\label{def-supercone}
A \emph{supercone} $J\sub M^k$, $k\ge 0$, is defined recursively as follows:
\begin{itemize}
\item $M^{0}=\{0\}$ is a supercone.

\item A definable set $J\sub M^{n+1}$ is a supercone if $\pi(J)\sub M^n$ is a supercone and there are  $\cal L$-definable continuous $h_1, h_2: M^n\to M\cup \{\pm\infty\}$ with $h_1<h_2$, such that for every $a\in \pi(J)$, $J_a$ is contained in $(h_1(a), h_2(a))$ and it is co-small in it.
\end{itemize}
Abusing terminology, we call a supercone  \emph{$A$-definable} if it is an $A$-definable set and its closure is $\Cal L_A$-definable.
\end{defn}

Note that, for $k>0$, the interior $U$ of $cl(J)$ is an open cell.




Recall that in our notation we identify a family $\cal J=\{J_g\}_{g\in S}$ with $\bigcup_{g\in S}\{g\}\times J_g$. In particular, $cl(\cal J)$ and $\pi_n(\cal J)$ denote the closure and a projection of that set, respectively.

\begin{defn}[Uniform families of supercones \cite{egh}]\label{def-uniform}
Let $\Cal J = \bigcup_{g\in S} \{g\}\times J_g\sub M^{m+k}$ be a definable family of supercones. We call $\Cal J$ \emph{uniform} if there is a cell $V\sub M^{m+k}$ containing $\cal J$, such that for every $g\in S$ and  $0<j\le k$,
$$cl(\pi_{m+j}(\cal J)_g)=cl(\pi_{m+j}(V)_g).$$
We call such a $V$ a \emph{shell} for $\cal J$. Abusing terminology, we call \cal J  \emph{$A$-definable}, if it is an $A$-definable  family of sets and  has an $\cal L_A$-definable shell.
\end{defn}

In particular, if $\cal J$ is uniform, then so is each projection $\pi_{m+j}(\cal J)$. Moreover, if $V$ is a shell for $\cal J$, then $\pi_{m+j}(V)$ is a shell for $\pi_{m+j}(\cal J)$.
Observe also that if $V$ is a shell for $\cal J$, then for every $x\in \pi_{m+k-1}(\cal J)$, $\cal J_x$ is co-small in $V_x$.\\

A shell for $\cal J$ need not be unique. Whenever we say that $\cal J$ is a uniform family of supercones with shell $V$, we just mean that $V$ is one of the shells for $\cal J$.

\begin{defn}[Cones \cite{egh} and product cones]\label{def-cone}
A set $C\sub M^n$ is a \emph{$k$-cone}, $k\ge 0$,  if there are a definable small $S\sub M^m$, a uniform family $\cal J=\{J_g\}_{g\in S}$ of supercones in $M^k$, and an $\cal L$-definable continuous function $h:V \subseteq M^{m+k}\to M^n$, where $V$ is a shell for $\cal J$, such that
\begin{enumerate}
\item $C=h(\cal J)$, and
\item for every $g\in S$, $h(g, -): V_g\sub M^k\to M^n$ is injective.
\end{enumerate}
We call $C$ a \emph{$k$-product cone} if, moreover, $\cal J=S\times J$, for some supercone $J\sub M^k$.
A \emph{(product) cone} is a $k$-(product) cone for some $k$. 
Abusing terminology, we call a cone $h(\cal J)$ \emph{$A$-definable} if $h$ is $\Cal L_A$-definable and  $\Cal J$ is  $A$-definable.
\end{defn}

The cone decomposition theorem below (Fact \ref{conedec}) is a statement about definable sets and functions. The notion of a `well-behaved' function in this setting is given next.

\begin{defn}[Fiber $\cal L$-definable maps \cite{egh}]\label{defn:fiber}
Let $C=h(\cal J)\sub M^n$ be a $k$-cone with $\cal J\sub M^{m+k}$, and $f: D\to M$ a definable function with $C\sub D$. We say that $f$ is \emph{fiber $\cal L$-definable with respect to $C$} if there is an $\Cal L$-definable continuous function $F: V\sub M^{m+k}\to M$, where $V$ is a shell containing $\cal J$, such that
\begin{itemize}
\item $(f\circ h)(x)=F(x)$, for all $x\in \cal J$.
\end{itemize}
We call $f$ \emph{fiber $\cal L_A$-definable with respect to $C$} if $F$ is $\cal L_A$-definable. 
\end{defn}

As remarked in \cite[Remark 4.5(4)]{egh}, the terminology is justified by the fact that, if $f$ is fiber $\cal L_A$-definable with respect to $C=h(\cal J)$, then  for every $g\in \pi(\cal J)$, $f$ agrees on $h(g, J_g)$ with an $\cal L_{Ag}$-definable map; namely $F\circ h(g, -)^{-1}$. Moreover, the notion of being fiber $\cal L$-definable with respect to a cone $C=h(\cal J)$, depends on $h$ and $\cal J$ (\cite[Example 4.6]{egh}). However, it is immediate from the definition that if $f$ is fiber $\cal L_A$-definable with respect to a cone $C=h(\cal J)$, and $h(\cal J')\sub h(\cal J)$ is another cone (but with the same $h$), then $f$ is also fiber $\cal L_A$-definable with respect to it.


We are now ready to state the cone decomposition theorem from \cite{egh}.
 \begin{fact}[Cone decomposition theorem {\cite[Theorem 5.1]{egh}}]\label{conedec}$ $
\begin{enumerate}
  \item \emph{Let $X\sub M^n$ be an $A$-definable set. Then $X$ is a finite union of $A$-definable cones.}
  \item \emph{Let $f:X\to M$ be an $A$-definable function. Then there is a finite collection $\mathcal{C}$ of   $A$-definable cones, whose union is $X$ and such that $f$ is fiber $\cal L_A$-definable with respect to each cone in $\cal{C}$.}
\end{enumerate}
 \end{fact}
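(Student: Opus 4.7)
The plan is to argue by simultaneous induction on the ambient dimension $n$ and on the complexity of the defining formula. For $n=0$ every definable set is a trivial cone, so assume $n \geq 1$. By Condition (II), it suffices to treat sets of the form
$$X = \{x \in M^n : \exists z \in P^m\ \varphi(x,z)\},$$
together with their boolean combinations. Finite unions are handled directly; intersections and complements are absorbed by first refining the input cones so that they sit over a common $\cal L$-definable cell decomposition of their shells, and then performing the operations slice-by-slice on each fiber.

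For the basic set $X$ above, I apply $\cal L_A$-cell decomposition to $Y = \{(x,z) : \varphi(x,z)\} \sub M^{n+m}$, obtaining $\cal L_A$-cells $Y_1,\dots,Y_s$, so that $X = \bigcup_i \pi_n\bigl(Y_i \cap (M^n \times P^m)\bigr)$. For each $Y_i$, the $P^m$-coordinates index a set $S_i \sub P^m$, which is small by Condition (I). The $M^n$-fibers of $Y_i$ above $S_i$, intersected with the $\cal L$-definable cell structure and cut down to their co-small parts, should be organized into a uniform family of supercones $\cal J$ lying inside the cell $Y_i$ (possibly after a permutation of coordinates so that the cell is a graph over its $P$-coordinates). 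The map $h$ is then read off the $\cal L_A$-definable parametrization of $Y_i$, and injectivity of $h(g,-)$ for $g\in S_i$ follows from the graph structure of the cell.

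The key step is establishing uniformity: the existence of a single $\cal L_A$-definable shell $V$ for the family $\cal J$. This is where Condition (III) enters crucially. Choosing a generic $g\in S_i$ that is $\dcl$-independent over $P$ and applying Condition (III) to the $Ag$-definable set $\cal J_g$, one obtains that $cl(\cal J_g)$ is $\cal L_{Ag}$-definable. A saturation-plus-compactness argument, together with a finite partition of $S_i$ into $\cal L$-definable pieces over which the parameter $g$ varies uniformly, then produces a common $\cal L_A$-definable shell $V$. I expect this uniformity step to be the main obstacle: the family $\cal J$ only inherits its good behaviour pointwise from $\cal L$-definability at each $g$, and forcing coherence across all $g\in S$ requires carefully exploiting the $\dcl$-independence of generic parameters over $P$.

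For part (2), having decomposed $X$ into cones $C = h(\cal J)$ as above, I study $f\circ h : \cal J \to M$. At a generic $g\in \pi_m(\cal J)$ independent from $P$, the graph of $f\circ h(g,-)$ is an $Ag$-definable subset of the open set $V_g$; applying Condition (III) to this graph shows it is $\cal L_{Ag}$-definable, so $f\circ h(g,-)$ agrees on $\cal J_g$ with a continuous $\cal L_{Ag}$-definable function. A definable-choice argument, followed by a further finite refinement of the cone decomposition to absorb the exceptional parameter values, patches these together into a single $\cal L_A$-definable continuous map $F : V \to M$ witnessing fiber $\cal L_A$-definability in the sense of Definition \ref{defn:fiber}.
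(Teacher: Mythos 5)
You should first note that the paper does not prove this statement at all: it is stated as a \emph{Fact} and imported verbatim from \cite[Theorem 5.1]{egh}, so there is no in-paper argument to compare yours against. Judged on its own terms, your sketch assembles the right ingredients (Condition (II) to reduce to existential sets over $P^m$, $\cal L$-cell decomposition of $\{(x,z):\varphi(x,z)\}$, smallness of the index set from Condition (I), Condition (III) for $\cal L$-definability of closures), but it has two genuine gaps.

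First, the boolean combinations are where essentially all the work lies, and you dismiss them in one sentence. For a single existential set $\bigcup_{z\in P^m}(Y_i)_z$ the fibers are \emph{full} cells; no co-smallness, hence no supercone structure, is present yet. The supercones only appear when you intersect with complements of other such unions (the prototype being $M\sm P$, co-small in $M$), so "performing the operations slice-by-slice" is precisely the step that needs an argument: you must show that the complement of a cone inside a cell is again a finite union of cones, which requires controlling how a small union of $\cal L$-definable fibers sits inside each fiber of the ambient cell. Second, your uniformity step is incoherent as stated: you propose to choose $g\in S_i$ "$\dcl$-independent over $P$", but $S_i$ is small, i.e.\ $P$-internal, and by the very lemma this paper invokes (\cite[Lemma 3.11]{egh}) every element of an $A$-definable small set lies in $\dcl(AP)$. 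Condition (III) is only available for parameter sets $B$ with $B\sm P$ $\dcl$-independent over $P$, so it cannot be applied naively with the parameter $g$ adjoined; the standard remedy is to write $g$ as an $\cal L_A$-definable image of a tuple from $P^m$ and work with that tuple instead, re-quantifying over $P$. The same issue affects your argument for part (2), where you apply Condition (III) to the graph of $f\circ h(g,-)$ over a "generic $g$". Until these two points are repaired, the proposal does not constitute a proof.
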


Another important notion from \cite{egh} is that of `large dimension', which we recall next. The proof of Theorem \ref{main1}(2) runs by induction on large dimension.


\begin{defn}[Large dimension \cite{egh}]
Let $X\sub M^n$ be definable. If $X\ne \emptyset$, the \emph{large dimension} of $X$ is the maximum $k\in \bb N$ such that $X$ contains a $k$-cone. The large dimension of the empty set is defined to be $-\infty$. We denote the large dimension of $X$ by  $\ldim(X)$.
\end{defn}

\begin{remark} The tameness conditions that we assume in this paper guarantee that the notion of large dimension is well-defined; namely, the above maximum $k$ always exists (\cite[Section 4.3]{egh}). In fact, everything that follows only uses the following two assumptions (instead of the tameness conditions): (a) Fact \ref{conedec} and (b) the notion of large dimension is well-defined.
\end{remark}

\section{Product cone decompositions}\label{sec-main}

In this section we prove Theorem \ref{main1}.  

\subsection{The linear case}\label{sec-linear} The following definition is taken from \cite{lp}.

\begin{defn}[\cite{lp}]
Let  $\cal{N}=\langle N, +, <, 0, \dots\rangle$ be an o-minimal expansion of an ordered group. A function $f:A\sub N^n\rarr N$ is called \emph{affine}, if for every $x,y, x+t, y+t\in B$,
\begin{equation}
  f(x+t)-f(x)=f(y+t)-f(y).\label{eq-linear}
\end{equation}
We call \cal N \emph{linear} if for every definable $f:A\sub N^n\rarr N$, there is a partition of $A$ into finitely many definable sets $B$, such that each $f_{\res B}$ is affine.
\end{defn}

The typical example of a linear o-minimal structure is that of an ordered vector space $\mathcal{V}=\langle V, <, +, 0, \{d\}_{d\in D}\rangle$ over an ordered division ring $D$. In general, if $\cal N$ is linear, then there exists a reduct \cal S of such $\cal V$, such that ${\mathcal S} \equiv \cal N$ (\cite{lp}). Using this description, it is not hard to see that every affine function has a continuous extension to the closure of its domain. 

Assume now that our fixed structure $\cal M$ is linear.

\begin{lemma}\label{H}
Let $h:[a, b]\times [c,d]\to M$ be an $\cal L$-definable continuous function, such that for every $t\in (a, b)$, $h(t, -):[c, d]\to M$ is strictly increasing. Then
$$h(b, d)-h(b, c)>0.$$
\end{lemma}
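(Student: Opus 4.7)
The plan is to exploit the piecewise affine structure of $h$ coming from the linearity of $\mathcal{M}$, apply it to a cell decomposition near the right edge $\{b\}\times[c,d]$, and then pass a telescoping identity to the limit $t\to b^-$.

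First I would fix an $\mathcal{L}$-definable cell decomposition of $[a,b]\times[c,d]$ (with the $t$-coordinate projected first) compatible with a finite partition of $[a,b]\times[c,d]$ on which $h$ is affine. Such a decomposition exists since $\mathcal{M}$ is linear. Choosing $t_0\in(a,b)$ large enough that $(t_0,b)$ lies in a single $1$-cell at the $t$-level, the $2$-cells meeting the strip $(t_0,b)\times[c,d]$ take the form
\[
C_i=\{(t,y):t_0<t<b,\ \phi_{i-1}(t)<y<\phi_i(t)\},\quad i=1,\dots,k,
\]
with $c=\phi_0<\phi_1(t)<\dots<\phi_k(t)=d$, where each $\phi_i$ is an $\mathcal{L}$-definable affine continuous function on $(t_0,b)$; being affine on a bounded interval, each $\phi_i$ extends continuously to $t=b$.

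On each open $2$-cell $C_i$ the function $h$ is affine, and in the linear o-minimal setting this means $h(t,y)=\lambda_1^i t+\lambda_2^i y+\gamma^i$ for some $\lambda_1^i,\lambda_2^i$ in the definable division ring of $\mathcal{M}$ and some $\gamma^i\in M$. The strict monotonicity of $h(t,-)$ for $t\in(a,b)$, applied to the $y$-slice of $C_i$, forces $\lambda_2^i>0$. Using continuity of $h$ across the boundary curves $y=\phi_i(t)$ to telescope, one obtains
\[
h(t,d)-h(t,c)=\sum_{i=1}^k\lambda_2^i\bigl(\phi_i(t)-\phi_{i-1}(t)\bigr)\qquad \text{for } t\in(t_0,b).
\]
Taking the continuous limit as $t\to b^-$ gives
\[
h(b,d)-h(b,c)=\sum_{i=1}^k\lambda_2^i\bigl(\phi_i(b)-\phi_{i-1}(b)\bigr),
\]
with every summand nonnegative (since $\lambda_2^i>0$ and $\phi_i(b)\geq\phi_{i-1}(b)$ as limits of strict inequalities). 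Since $\sum_{i=1}^k(\phi_i(b)-\phi_{i-1}(b))=d-c>0$, at least one summand is strictly positive, so $h(b,d)-h(b,c)>0$.

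The main subtlety I expect is establishing the affine representation $h(t,y)=\lambda_1 t+\lambda_2 y+\gamma$ on the open $2$-cells: this relies on the structural description of linear o-minimal structures as reducts of ordered vector spaces over a division ring, and on the fact that definable continuous additive endomorphisms of $M$ form precisely this division ring. Once that is in place, the remainder of the argument is bookkeeping about the cell decomposition together with a straightforward passage to the limit.
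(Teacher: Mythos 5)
Your proof is correct, and it rests on the same two pillars as the paper's: a cell decomposition of $[a,b]\times[c,d]$ into pieces on which $h$ is affine, followed by a continuous passage to the limit $t\to b^-$. The execution differs in a way worth noting. You telescope $h(t,d)-h(t,c)$ across \emph{all} cells meeting the strip near $b$ and then use that the total width $\sum_i(\phi_i(b)-\phi_{i-1}(b))=d-c>0$ survives at $t=b$, so some cell has positive width there and contributes strictly. The paper instead isolates a \emph{single} cell $W=(f,g)_I$ whose width $\delta(t)=g(t)-f(t)$ is non-decreasing on some $[r,b)$ (such a cell exists for essentially your reason: the widths are eventually monotone and sum to the constant $d-c$), and shows via the affine functional equation plus monotonicity of $h(t,-)$ that $h(t,g(t))-h(t,f(t))\ge h(r,g(r))-h(r,f(r))>0$, which then bounds the limit from below. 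Your route buys a cleaner endgame (no need to hunt for a width-non-decreasing cell), but pays by invoking the Loveys--Peterzil scalar representation $h(t,y)=\lambda_1^i t+\lambda_2^i y+\gamma^i$; as you note, the $\lambda_2^i$ live a priori only as partial endomorphisms, so strictly speaking you should read $\lambda_2^i(\cdot)$ as the increasing additive difference function $s\mapsto h(t,y+s)-h(t,y)$, which the affine identity makes independent of $(t,y)$ --- with that reading your telescoping and limit go through without needing the full vector-space structure, exactly as in the paper's more hands-on manipulation.
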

\begin{proof}
Let $\cal W$ be a cell decomposition of $[a, b]\times [c,d]$ such that for every $W\in \cal W$, $h_{\res W}$ is affine. Since $d-c>0$, there must be some $W=(f,g)_I\in \cal W$, where $I$ is an interval with $\sup I=b$, and $r\in I$, such that the map $\delta(t):=g(t)-f(t)$ is increasing on $[r, b)$. We claim that for every $t\in (r, b)$,
$$h(t, g(t))-h(t, f(t))\ge h(r, g(r)) -h(r, f(r)).$$
Indeed, there is $k\ge 0$, such that
$$h(t, f(t) +\delta(t)) - h(t, f(t))=h(t, f(t) +\delta(r)+k) - h(t, f(t))=$$
$$h(t, f(t) +\delta(r)+k) +h(t, f(t) +\delta(r)) - h(t, f(t) +\delta(r)) + h(t, f(t))\ge$$
$$\ge h(t, f(t) +\delta(r))- h(t, f(t))=h(r, f(r) +\delta(r)) -h(r, f(r)),$$
where the inequality is because $h(t, -)$ is increasing, and the last equality because $h$ is affine on $W$.
We conclude that
\begin{align}
  h(b, d)-h(b,c) &= \lim_{t\to b}\big(h(t, d)-h(t,c)\big)\notag\\
  &\ge \lim_{t\to b} \big(h(t, g(t))-h(t, f(t))\big)\notag
  \\&\ge h(r, g(r)) -h(r, f(r))\notag\\
  &>0,\notag
\end{align}
where the first and last inequalities are because $h(t, -)$ and $h(r, -)$ are  strictly increasing.
\end{proof}




\noindent\textbf{Counterexample to product cone decomposition.}
Let $S\sub M$ be a small set such that $0$ is in the interior of its closure (by translating $P$ to the origin, such an $S$ exists).
Let
$$X=\bigcup_{a\in S^{>0}}\{a\}\times (0, a).$$

\begin{claim}
  $X$ is not a finite union of product cones.
\end{claim}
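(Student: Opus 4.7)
The plan is to suppose for contradiction that $X=\bigcup_{i=1}^N C_i$ with each $C_i$ a product cone, and derive a contradiction by extracting a uniform positive lower bound on certain ``widths'' of the $C_i$'s, which forces $S^{>0}$ to be bounded away from $0$ and so contradicts that $0$ is in the interior of $cl(S)$. Since $\ldim(X)\leq 1$, each $C_i$ may be taken as a $1$-product cone $C_i=h_i(S_i\times J_i)$ with $J_i$ co-small in some interval $(c_i,d_i)\sub M$ and $h_i\colon V_{0i}\times(c_i,d_i)\to M^2$ continuous $\Cal L$-definable, with $h_i(g,-)$ injective for $g\in S_i$; $0$-product cones may be discarded since they contribute only small sets and cannot cover any $X_a=(0,a)$. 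Writing $h_i(g,j)=(\alpha_i(g,j),\beta_i(g,j))$, I would first observe that $\alpha_i$ is independent of $j$ on $S_i$: for fixed $g\in S_i$, the continuous image $\alpha_i(g,(c_i,d_i))\sub M$ is the union of $\alpha_i(g,J_i)\sub S^{>0}$ and $\alpha_i(g,(c_i,d_i)\sm J_i)$, both small (the second by closure of ``small'' under $\Cal L$-definable maps), hence small; being a connected subset of $M$, it must be a singleton. Then $\beta_i(g,-)$ is continuous injective on $(c_i,d_i)$, so strictly monotone; after splitting the cone into two product sub-cones we may take it strictly increasing, with image an open interval $(p_i(g),q_i(g))\sub(0,\alpha_i(g))$, and the width $\psi_i(g):=q_i(g)-p_i(g)$ satisfies $\psi_i(g)\leq\alpha_i(g)$.

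The crux is to use linearity of $\Cal M$ to bound each $\psi_i$ uniformly below by a positive constant $\epsilon_i$. I would take a cell decomposition of $V_{0i}\times(c_i,d_i)$ on which $\beta_i$ is affine, with $\beta_i|_{W_t}(g,j)=A_tj+B_t\cdot g+C_t$ on each open cell $W_t$. For $g\in S_i$, strict monotonicity of $\beta_i(g,-)$ on $(c_i,d_i)$ forces $A_t>0$ on every open cell with non-empty slice at $g$ (else $\beta_i(g,-)$ would fail to be strictly increasing on that subinterval). Since these active slices partition $(c_i,d_i)$ up to boundary, summing yields
$$\psi_i(g)=\sum_{t\text{ active at }g}A_t\cdot|\text{slice at }g|\;\geq\;\Bigl(\min_{t:\,A_t>0}A_t\Bigr)(d_i-c_i)\;=:\;\epsilon_i\;>\;0.$$
This is the spirit of Lemma H applied globally rather than at a single boundary point: the same cell-decomposition and slope analysis used to prove that lemma propagates into a uniform bound on $\psi_i$ across all of $S_i$. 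Combining, $\alpha_i(g)\geq\psi_i(g)\geq\epsilon_i$ for every $g\in S_i$, hence $\alpha_i(S_i)\sub[\epsilon_i,\infty)$; since $\bigcup_i\alpha_i(S_i)=\pi_1(X)=S^{>0}$, we conclude $S^{>0}\sub[\min_i\epsilon_i,\infty)$, contradicting that $S^{>0}$ contains points arbitrarily close to $0$.

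The main obstacle I foresee is handling the cell decomposition in the multi-dimensional parameter $g\in V_{0i}$: one must choose the decomposition so that the slope $A_t$ in $j$ is well-defined and constant on each cell, and reducing the general shell of a product cone to a product shell of the form $V_{0i}\times(c_i,d_i)$ also requires some bookkeeping. These are technical rather than conceptual difficulties, and should be resolvable by standard piecewise-affine cell-decomposition arguments in the linear setting.
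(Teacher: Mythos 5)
Your proof is correct, but it takes a genuinely different route from the paper's. The paper argues \emph{locally at the origin}: it takes a single product cone $H(T\times J)$ whose closure contains $(0,0)$, selects an affine curve $\gamma$ in the parameter space along which the first coordinate tends to $0$, extends the affine map $H_2(\gamma(-),-)$ continuously to a closed box, and invokes Lemma \ref{H} to conclude that the limiting width $h(b,d)-h(b,c)$ is strictly positive, contradicting that the fibers $(0,a)$ shrink to a point. You instead prove a \emph{global, uniform} version of the same slope estimate: after decomposing the shell into cells on which the map is affine, every $g\in S_i$ has some active slice of length at least $(d_i-c_i)/N$, whence $\psi_i(g)\geq\epsilon_i>0$ uniformly, so each $\alpha_i(S_i)$ is bounded away from $0$ and $S^{>0}$ cannot accumulate at $0$. (Your preliminary observation that $\alpha_i(g,-)$ is constant, because its image is small and connected, is a self-contained substitute for the paper's appeal to \cite[Lemma 5.10]{egh}.) Both arguments rest on the same piecewise-affine slope analysis; yours buys a stronger uniform conclusion and avoids curve selection and limits, at the cost of heavier bookkeeping over the multi-dimensional parameter, while the paper only needs the estimate in the limit at one boundary point, which is why Lemma \ref{H} tracks a single cell whose width is increasing near $b$. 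Three small points to tidy up, none of which is a real gap: (i) in a general linear $\cal M$ the slopes $A_t$ are partial endomorphisms of $\la M,+\ra$ rather than scalars, so your displayed inequality should be replaced by ``some active slice has length at least $(d_i-c_i)/N$ and hence contributes at least $\min_t\lambda_t\big((d_i-c_i)/N\big)>0$''; (ii) to get $\psi_i(g)\leq\alpha_i(g)$ you need the image under $\beta_i(g,-)$ of the whole fiber $(c_i,d_i)$, not just of $J_i$, to land in $[0,\alpha_i(g)]$, which follows from density of the co-small set $J_i$ in $(c_i,d_i)$ together with continuity; (iii) the case of an unbounded shell fiber must be excluded, e.g.\ because a piecewise-affine injection cannot map an unbounded interval into the bounded set $[0,\alpha_i(g)]$.
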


\begin{proof}
First of all, $X$ cannot contain any $k$-cones for $k>1$, since $\ldim(X)=1$, by \cite[Lemma 4.24 and 4.27]{egh}.
Now let $H(T\times J)$ be an $1$-product cone contained in $X$, with $H=(H_1, H_2):Z\sub M^{l+1}\to M^2$, such that the origin is in its closure. Since $H$ is $\cal L$-definable and continuous, and for each $g\in T$, $H_2(g, -)$ is injective, we may assume that the latter is always strictly increasing. 
By \cite[Lemma  5.10]{egh} applied to $J$, $f(-)=\pi_1 H(g, -)$ and $S$, we have
\begin{itemize}
   \item for every $g\in T$, there is $a\in S$, such that $H(g, J)\sub \{a\}\times (0, a)$.
       \end{itemize}
 By continuity of $H$, it follows that
 \begin{itemize}
 \item for every $g\in cl(T)\cap\pi(Z)$, there is $a\in M$, such that $$H(g, cl(J))\sub \{a\}\times [0, a].$$
 \end{itemize}
Let $f:\pi(Z)\to M$ be the $\cal L$-definable map given by
$$f(g)=\pi_1 (H(g, cl(J))).$$
Since the origin is in the closure of $H(T\times J)$, there must be an affine $\gamma:(a,  b)\to cl(T)\cap \pi(Z)$ with $\lim_{t\to b} f(\gamma(t))=0$. Now the map
$$H_2(\gamma(-),-):(a,b)\times (c,d)\to M$$
is affine and hence has a continuous extension $h$ to $[a, b]\times [c,d]$. By definition of $X$,
$$h(b, c)=h(b, d)=0.$$
But, by  Lemma \ref{H},
$$h(b, d)-h(b, c)>0,$$
a contradiction.
\end{proof}


\subsection{The field case}\label{sec-field}


We now assume that \cal M expands an ordered field. The main idea behind the proof in this case is as follows. By Fact \ref{conedec}, it suffices to write every cone as a finite union of product cones. We illustrate the case of an $1$-cone $C=h(\cal J)$, for some $\cal J=\{J_g\}_{g\in S}$. \\

\noindent\textbf{Step I} (Lemma \ref{Lfact}). Replace $\cal J$ by a cone $\cal J'=\{J'_g\}_{g\in S}$, such that for some fixed interval $I$, each  $J'_g$ is contained in $I$ and it is co-small in it. Here we use the field structure of $\cal M$, so this step would fail in the linear case.\\

\noindent\textbf{Step II} (Lemma \ref{field1}). By \cite[Lemma 4.25]{egh}, the intersection $J=\bigcap_{g\in S}  J'_g$ is co-small in $I$. Moreover, if we let $L=S\times J$, then, by \cite[Lemma 4.29]{egh}, we obtain that the large dimension of $\cal J\sm L$ is $0$.\\

\noindent\textbf{Step III} (Theorem \ref{rcf}). Use Steps I and II and induction on large dimension. Here, the inductive hypothesis is only applied to sets of large dimension $0$. In general, $\ldim(\cal J\sm L)<\ldim (\cal J)$. \\


To achieve Step I, we first need to  make an observation and fix some notation. Using the field multiplication, one can define an $\cal L_\emptyset$-definable continuous $f:M^3\to M$, such that for every $b, c\in M$,
$$f(b, c, -): (b, c)\to (0, 1)$$
is a bijection. Similarly, there are $\cal L_\emptyset$-definable continuous maps $f_{1}, f_2:M^2\to M$, such that for every $b, c\in M$, the maps
$$f_{1}(b, -): (b, +\infty)\to (0, 1)$$
and
$$f_{2}(c, -): (-\infty, c)\to (0, 1)$$
are bijections. To give all these  maps a uniform notation, we write $f(b, +\infty, x)$ for $f_1(b,x)$, and $f(-\infty,c, x)$ for $f_2(c, x)$ and . We fix this $f$ for the next proof. Observe that if $J\sub (b,c)$ is co-small in $(b,c)$, for $b,c \in M\cup\{\pm \infty\}$, then $f(b,c, J)$ is co-small in $(0, 1)$.




\begin{lemma}\label{Lfact}
Let $\cal J\sub M^{m+k}$ be an $A$-definable  uniform family of supercones, with  shell $Z\sub M^{m+k}$. Then there are
 \begin{itemize}
   \item an $A$-definable  uniform family $\cal J'=\{J'_g\}_{g\in S}$ of supercones $J'_g\sub M^k$, with a shell $\pi(Z)\times (0,1)^k$,
   \item and an $\cal L_A$-definable continuous and injective map $F:Z\to M^{m+k}$, 
 such that $$F(\cal J)=\cal J'.$$
 \end{itemize}
\end{lemma}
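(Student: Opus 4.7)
The plan is to exploit the field multiplication, packaged in the continuous map $f$ fixed right before the lemma, to triangularly rescale the shell $Z$ coordinate by coordinate so that its last $k$ interval fibres all become $(0,1)$. This produces the desired $\cal L_A$-definable continuous injection $F:Z\to \pi(Z)\times (0,1)^k$, and the family $\cal J':=F(\cal J)$ will inherit the supercone structure from $\cal J$ while acquiring the rectangular shell $\pi(Z)\times (0,1)^k$.

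First I would unpack the cell structure of $Z$. Since $Z$ is a shell of the uniform family $\cal J$, the observation after Definition \ref{def-uniform} (``$\cal J_x$ is co-small in $V_x$'') forces the last $k$ coordinates of $Z$ to have open-interval form: for each $0<j\le k$ there exist $\cal L_A$-definable continuous $h_1^j,h_2^j:\pi_{m+j-1}(Z)\to M\cup\{\pm\infty\}$ with $h_1^j<h_2^j$ such that
$$
\pi_{m+j}(Z)=\bigl\{(x,y)\in \pi_{m+j-1}(Z)\times M : h_1^j(x)<y<h_2^j(x)\bigr\}.
$$
Then define $F(g,x_1,\ldots,x_k):=(g,y_1,\ldots,y_k)$ by the recursion
$$
y_j := f\!\left(h_1^j(g,x_1,\ldots,x_{j-1}),\, h_2^j(g,x_1,\ldots,x_{j-1}),\, x_j\right).
$$
Because each $f(b,c,-):(b,c)\to (0,1)$ is a continuous bijection, $F$ is $\cal L_A$-definable, continuous, injective, and its image is exactly $\pi(Z)\times (0,1)^k$; its inverse is continuous by the same triangular unwinding, so $F$ is a homeomorphism onto its image.

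Set $\cal J':=F(\cal J)$, and for each $g\in S$ let $\phi_g:=F(g,-):Z_g\to (0,1)^k$, so $J'_g:=\phi_g(J_g)$. The triangular shape of $\phi_g$ yields, for every $0<j\le k$, that $\pi_j(J'_g)=\phi_g^{(j)}(\pi_j(J_g))$ for the analogous truncated homeomorphism $\phi_g^{(j)}:\pi_j(Z_g)\to (0,1)^j$. An induction on $k$ using the fact (noted just before the lemma) that $f(b,c,-)$ sends co-small subsets of $(b,c)$ to co-small subsets of $(0,1)$ shows each $J'_g$ is a supercone in $(0,1)^k$. For uniformity with the rectangular shell: the original shell condition gives that $\pi_j(J_g)$ is dense in $\pi_j(Z_g)$, and since $\phi_g^{(j)}$ is a homeomorphism, $\pi_j(J'_g)$ is dense in $(0,1)^j$; hence $cl(\pi_j(J'_g))=[0,1]^j=cl\bigl(\pi_{m+j}(\pi(Z)\times (0,1)^k)_g\bigr)$, which is what is required. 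Since $\pi(Z)$ is $\cal L_A$-definable as a projection of the $\cal L_A$-definable shell $Z$, the new shell $\pi(Z)\times (0,1)^k$ is $\cal L_A$-definable, so $\cal J'$ is $A$-definable in the sense of Definition \ref{def-uniform}.

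The only mildly delicate point is the opening claim that the shell $Z$ really has the stated open-interval structure in its last $k$ coordinates; this is essentially built into Definition \ref{def-uniform} (else the co-small-in-$V_x$ observation would make no sense). Everything else is bookkeeping. Crucially, the field is used only through the rescaling map $f$, which is precisely the ingredient unavailable in the linear setting treated in Section \ref{sec-linear}.
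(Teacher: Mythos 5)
Your proposal is correct and follows essentially the same route as the paper: unpack the shell $Z$ into its interval fibres $(h_1^j,h_2^j)$, define $F$ coordinate-by-coordinate via the fixed rescaling map $f$, and set $\cal J'=F(\cal J)$, with the supercone and uniformity properties checked by induction on the coordinates. Your verification of why $F$ preserves supercones and shells is in fact spelled out in slightly more detail than the paper's, which leaves that check to the reader.
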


\begin{proof}


For every $g\in \pi(\cal J)$, since $J_g$ is a supercone, it follows that $Z_g$ is an open cell. Hence, for every $0<j\le k$, there are $\cal L_A$-definable continuous  maps $h_1^j, h_2^j: \pi_{l+j-1}(Z)\to M$ such that
$$\pi_{m+j}(Z)=(h_1^j, h_2^j)_{\pi_{m+j-1}(Z)}.$$
We define
$$F=(F_1, \dots, F_{m+k}):Z \to M^{m+k},$$ as follows. Let $I=(0,1)$ and $f$ be the map fixed above. Let $(g, t) \in Z\sub M^{m+k}$\smallskip

\noindent If $1\le i\le m$,
$$F_i(g,t)=g_i.$$

\noindent If $i=m+j$, with  $0<j\le k$,
$$F_{m+j}(g,t)=f(h_1^j(g, t_1, \dots, t_{j-1}), h_2^j(g, t_1, \dots, t_{j-1}), t_j).$$
Clearly, $F$ is injective, $\cal L_A$-definable and continuous. Let $$\cal J'=F(\cal J).$$
That is, $\cal J'=\{J'_g\}_{g\in S}$, where for every $g\in S$, $J'_g=F(g, J_g)$.
It is not hard to check, by induction on $m$, that for every $0<m\le k$, $\pi_{m+j}(\cal J')$ is an $A$-definable uniform family of supercones with  shell $F(Z)=\pi(Z)\times I^m$. 
\end{proof}


\begin{lemma}\label{field1}
Let $\cal J=\bigcup_{g\in S} \{g\}\times J_g\sub M^{m+k}$ be an $A$-definable uniform family of supercones in $M^k$ with shell $Z$. Suppose that $Z=\pi(Z)\times I^k$, where $I=(0,1)$.
Then $\cal J$ is a disjoint union
$$(S\times J) \cup Y,$$
where $S\times J$ is an $A$-definable uniform family of supercones with shell $Z$, 
and $Y$ is an $A$-definable set of large dimension $<k$. 
\end{lemma}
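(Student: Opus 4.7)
The plan is to set $J := \bigcap_{g \in S} J_g \subseteq I^k$ and $Y := \cal J \setminus (S \times J)$, and to show these furnish the required decomposition. Writing $T := \pi_m(Z)$, so that $Z = T \times I^k$, the whole lemma reduces to two claims: that $J$ is a supercone whose projections $\pi_j(J)$ are dense in $I^j$ for every $1 \le j \le k$ (which makes $S \times J$ an $A$-definable uniform family of supercones with shell $Z$, since every fibre over $g \in S$ is simply $J$), and that $\ldim(Y) < k$. The $A$-definability of $J$, and hence of $Y$, is inherited directly from that of $\cal J$ and $S$.

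I would prove the claim about $J$ by induction on $k$, the case $k = 0$ being trivial. For the inductive step, the projected family $\pi(\cal J) = \bigcup_g \{g\} \times \pi(J_g)$ is an $A$-definable uniform family of supercones in $M^{k-1}$ with shell $T \times I^{k-1}$, so by induction $J' := \bigcap_g \pi(J_g)$ is a supercone with dense projections in $I^{k-1}$. Next, for each fixed $a \in J'$, the family $\{(J_g)_a\}_{g \in S}$ is a uniform family of co-small subsets of $I$ indexed by the small set $S$, so \cite[Lemma 4.25]{egh} gives that $\bigcap_g (J_g)_a$ is co-small in $I$, and in particular nonempty. Two things follow at once: $\pi(J) = J'$ (the non-trivial inclusion uses precisely the nonemptiness of fibre intersections), and each fibre $J_a = \bigcap_g (J_g)_a$ over $a \in \pi(J)$ is co-small in $I$. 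Hence $J$ is a supercone, and a short density argument promotes this to $cl(\pi_j(J)) = [0,1]^j$ for every $j$.

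For $\ldim(Y) < k$, observe $Y = \bigcup_{g \in S} \{g\} \times (J_g \setminus J)$. Since $J$ and $J_g$ are both supercones in $I^k$ with $J \subseteq J_g$ and both having closure $[0,1]^k$, each fibre $J_g \setminus J$ has large dimension strictly less than $k$; the passage from this fibrewise bound to the global bound $\ldim(Y) < k$ is exactly the content of \cite[Lemma 4.29]{egh}, which I would invoke directly. This mirrors the $k = 1$ argument sketched in the paper's outline of Step II.

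The main obstacle I foresee is the reverse inclusion $J' \subseteq \pi(J)$ in the inductive step: it is precisely here that \cite[Lemma 4.25]{egh} is crucial, via nonemptiness of fibre intersections. A secondary technical matter is checking, for each fixed $a \in J'$, that $\{(J_g)_a\}_{g \in S}$ is uniform in the sense Lemma 4.25 demands; this should follow cleanly from the shell structure of $\cal J$ (in particular from the fact that $I$ serves as a common bounding interval for all fibres) but should be spelled out.
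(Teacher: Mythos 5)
Your construction of $J=\bigcap_{g\in S}J_g$ is sound and coincides with the set the paper builds: the paper assembles it coordinate by coordinate inside its induction on $k$, whereas you take the global intersection and verify supercone-ness by the same induction, and your use of \cite[Lemma 4.25]{egh} to get nonempty, co-small fibre intersections — hence $\pi(J)=\bigcap_g\pi(J_g)$ — is exactly the key point there. The first half of your argument is therefore correct.

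The gap is in the bound $\ldim(Y)<k$. For $k=1$ your argument is literally the paper's Step II: the fibres $J_g\setminus J$ are then small and \cite[Lemma 4.29]{egh} applies directly. For $k\ge 2$ it does not. You reduce everything to the assertion that $\ldim(J_g\setminus J)<k$, justified only by ``$J\subseteq J_g$ are supercones with the same closure'', and then invoke Lemma 4.29 to pass to the union over $S$. Neither step is available as stated: Lemma 4.29, as the paper uses it, concerns families whose fibres are \emph{small} (large dimension $0$) and gives that the large dimension of the union equals that of the base, so for fibres of large dimension up to $k-1$ you would at least need the subadditivity statement \cite[Corollary 5.5]{egh} instead. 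More seriously, the fibre claim itself is not a quotable fact; it requires its own induction on $k$: split $J_g\setminus J$ according to whether the first $k-1$ coordinates lie in $\pi(J)$ — the part over $\pi(J_g)\setminus\pi(J)$ has a base of large dimension $<k-1$ (inductive hypothesis, then Corollary 5.5), while the part over $\pi(J)$ has small fibres (Lemma 4.29). This two-case splitting is precisely what the paper does globally: it first applies the inductive hypothesis to $\pi(\cal J)$ to discard the portion of $\cal J$ lying over a base of large dimension $<k$ via Corollary 5.5, and only then removes, over the good base $S\times T$, the small fibre-differences $\cal J_{g,a}\setminus K_a$ via Lemma 4.29. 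Your write-up collapses these two cases into a single unproven claim; once you insert the induction just sketched, the proof closes and is essentially the paper's.
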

\begin{proof}
By induction on $k$. For $k=0$, the statement is trivial. We assume the statement holds for $k$, and prove it for $k+1$. Let $\pi:M^{m+k+1}\to M^{m+k}$ be the projection onto the first $m+k$ coordinates. Since $\pi(\cal J)$ is also an $A$-definable uniform family of supercones with shell $\pi(Z)$, by  inductive hypothesis we can write $\pi(\cal J)$ as a disjoint union
$$\pi(\cal J)= (S\times T) \cup Y,$$
where $T\sub M^{k}$ is an $A$-definable supercone with $cl(T)=cl(I^k)$, and $Y$ is an $A$-definable set of large dimension $<k$. By \cite[Corollary 5.5]{egh}, the set
$\bigcup_{t\in Y}\{t\}\times \cal J_t\sub \cal J$
has large dimension $<k+1$, and hence we only need to bring its complement $X$ in $\cal J$ into the desired form. We have
$$X=\bigcup_{t\in S\times T}  \{t\}\times \cal J_t.$$
Define, for every $a\in T$,
$$K_a=\bigcap_{g\in S} \cal J_{g, a}.$$
Since each $\cal J_{g, a}$ is co-small in $I$, by \cite[Lemma 4.25]{egh} $K_a$ is co-small in $I$. Hence, the set
$$L=\bigcup_{a\in T}\{a\}\times K_a$$
is a supercone in $M^{k+1}$. Since $cl(T)=cl(I^k)$ and for each $a\in T$, $cl(K_a)=cl(I)$, it follows that $cl(L)=cl(I^{k+1})$. In particular, $Z$ is a shell for $S\times L$. Since $S\times L\sub X$, it remains to prove that $\ldim(X\sm (S\times L))<k+1$. We have
$$X\sm (S\times L)= \bigcup_{(g, a)\in S \times T}\{(g, a)\}\times (\cal J_{g,a}\sm K_a).$$
But $\cal J_{g,a}\sm K_a$ is small, and hence,  by \cite[Lemma 4.29]{egh}, the above set has large dimension $=\ldim(S\times T)= k$.
\end{proof}

We can now conclude the main theorem of the paper.
\begin{theorem}[Product cone decomposition in the field case]\label{rcf}
Let $X\sub M^n$ be an $A$-definable set. Then
\begin{enumerate}
  \item $X$ is a finite union of $A$-definable product cones.

\item If $f:X\to M$ is an $A$-definable function, then there is a finite collection $\mathcal{C}$ of  $A$-definable product cones, whose union is $X$ and such that $f$ is fiber $\cal L_A$-definable with respect to each cone in $\cal{C}$.
\end{enumerate}
\end{theorem}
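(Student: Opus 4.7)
The plan is to prove both parts of Theorem \ref{rcf} simultaneously, by induction on $d=\ldim(X)$, using Fact \ref{conedec} to reduce at each stage to a single $A$-definable $k$-cone $C=h(\mathcal{J})$ (with $\mathcal{J}\sub M^{m+k}$ a uniform family of supercones and shell $Z$), and then using Lemmas \ref{Lfact} and \ref{field1} as the main workhorses. The base case $d\leq 0$ is immediate: every $0$-cone has $\mathcal{J}=S\times\{0\}$ and is therefore already a product cone, so both parts follow directly from Fact \ref{conedec}.

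For the inductive step, fix a $k$-cone $C=h(\mathcal{J})$ with $k\leq d$ appearing in a finite cone decomposition of $X$. First I apply Lemma \ref{Lfact} to obtain an $\mathcal{L}_A$-definable continuous injection $F\colon Z\to M^{m+k}$ such that $\mathcal{J}':=F(\mathcal{J})$ is a uniform family of supercones with the product shell $\pi(Z)\times(0,1)^k$, and I reparametrize by $h':=h\circ F^{-1}$, so that $C=h'(\mathcal{J}')$ with $h'$ still $\mathcal{L}_A$-definable, continuous and fiber-injective. If $f\circ h=F_0$ witnesses the fiber $\mathcal{L}_A$-definability of $f$ on $C$, then $f\circ h'=F_0\circ F^{-1}$ witnesses it for the reparametrized cone $C=h'(\mathcal{J}')$. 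Next I apply Lemma \ref{field1} to $\mathcal{J}'$, obtaining a disjoint decomposition
$$\mathcal{J}'=(S\times J)\sqcup Y,$$
where $S\times J$ is a uniform family of supercones (again with shell $\pi(Z)\times(0,1)^k$) and $\ldim(Y)<k$. Hence
$$C=h'(S\times J)\cup h'(Y).$$
The first piece is by construction an $A$-definable $k$-product cone, and since it is a sub-cone of $C$ sharing the parametrizing map $h'$, the remark following Definition \ref{defn:fiber} gives that $f$ remains fiber $\mathcal{L}_A$-definable on it.

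For the second piece, fiber-injectivity of $h'$ forces $\ldim(h'(Y))\leq\ldim(Y)<k\leq d$, so the inductive hypothesis applied (in its part (2) form) to $h'(Y)$ together with $f\res h'(Y)$ produces a finite decomposition of $h'(Y)$ into $A$-definable product cones compatible with $f$. Combining with the product cone $h'(S\times J)$ finishes $C$, and summing over the finitely many cones of the initial decomposition finishes $X$.

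The main obstacle I anticipate is closing the induction by verifying $\ldim(h'(Y))\leq\ldim(Y)$: because $h'$ is only fiber-injective rather than globally injective, a $j$-cone sitting inside $h'(Y)$ does not obviously come from a $j$-cone in $Y$, and one has to pull such a cone back fiber by fiber using the large-dimension machinery developed in \cite[Section 4]{egh}. Once this is in place, everything else reduces to a direct application of the two preceding lemmas together with the remark after Definition \ref{defn:fiber}.
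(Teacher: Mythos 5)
Your proposal is correct and follows essentially the same route as the paper: induction on large dimension, reduction to a single $k$-cone via Fact \ref{conedec}, reparametrization by the map $F$ of Lemma \ref{Lfact}, splitting off $S\times J$ via Lemma \ref{field1}, and transporting fiber $\cal L_A$-definability through $F^{-1}$ and the remark after Definition \ref{defn:fiber}. The step you flag as the main obstacle, $\ldim(h'(Y))\le\ldim(Y)$, is also left implicit in the paper's own argument (it writes $h(\cal J)=h(S\times J)\cup h(Y)$ as being "in the desired form"), and is handled by the large-dimension machinery of \cite{egh}.
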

\begin{proof}

(1). We do induction on the large dimension of $X$. By Fact \ref{conedec}, we may assume that $X$ is a $k$-cone. Every $0$-cone is clearly a product cone. Now let $k>0$. By induction, it suffices to write $X$ as a union of an $A$-definable product cone and an $A$-definable set of large dimension $<k$.  Let $X=h(\cal J)$ be as in Definition \ref{def-cone}, and $Z\sub M^{m+k}$ a shell for $\cal J$. \vskip.2cm

\noindent\textbf{Claim.} \emph{We can write $X$  as a $k$-cone $h'(\cal J')$, such that for every $g\in \pi(\cal J')$, $cl(\cal J')_g=(0,1)^k$.}

\begin{proof}[Proof of Claim] Let $\cal J'$ and $F:Z\to M^{m+k}$ be as
in Lemma \ref{Lfact}, and define $h'=h\circ F^{-1}: F(Z)\to M^n$. Then
$$h(\cal J) = h F^{-1}(F(\cal J))=h'(\cal J')$$
is as required.
\end{proof}

By the claim, we may assume that for every $g\in S$, $cl(\cal J)_g=(0,1)^k$. By Lemma \ref{field1}, we have $\cal J=(S\times J) \cup Y$, where $J\sub M^k$ is an $A$-definable supercone, and $\ldim Y<k$. Thus $h(\cal J)= h(S\times J) \cup h(Y)$ has been written in the desired form.\\

\noindent (2). By Fact \ref{conedec}, we may assume that $X$ is a $k$-cone and that $f$ is fiber $\cal L_A$-definable with respect to it. So let again $X=h(\cal J)$ with shell $Z\sub M^{m+k}$, and in addition, $\tau: Z\sub M^{m+k}\to M$ with $\cal J\sub Z$, be $\cal L_A$-definable so that for every $x\in \cal J$, $$(f\circ h)(x)=\tau (x).$$
By induction on large dimension, it suffices to show that $X$ is the union of  a product cone $C$ and a set of large dimension $<k$, such that $f$ is fiber $\cal L_A$-definable with respect to $C$. Let $X=h'(\cal J')$ be as in Claim of Item (1) and $F:Z\to M^{m+k}$ as in its proof. So $h'= h\circ F^{-1}:F(Z)\to M^n$. Define $\tau' : F(Z)\to M^n$ as $\tau'=\tau \circ F^{-1}$. We then have, for every $x'\in \cal J'$,
$$fh'(x')=fh'F(x)=fh(x)=\tau(x)=\tau F^{-1}(x)=\tau'(x),$$
witnessing that $f$ is fiber $\cal L_A$-definable with respect to $h'(\cal J')$.

Therefore,  we may replace $h$ by $h$ and $\cal J$ by $\cal J'$. Now, as in the proof of Item (1), we can write $h(\cal J)$ as the union of a product cone $h(S\times J)$ and a set of large dimension $<k$. By the remarks following Definition \ref{defn:fiber}, $f$ is also fiber $\cal L$-definable with respect to $h(S\times J)$.
\end{proof}


\begin{remark} The above proof yields that in cases where we have disjoint unions in Fact \ref{conedec} (such as in \cite[Theorem 5.12]{egh}), so do we in Theorem \ref{rcf}. 
\end{remark}


\section{Refined supercones}\label{5.13(1)}

In this section we refute \cite[Question 5.14 (1)]{egh}. The question asked whether the Structure Theorem holds if we  strengthen the notion of a supercone as follows.


\begin{defn} A supercone $\cal J$ in $M^k$ is called \emph{refined} if it is of the form  $$\cal J=J_1\times \dots \times J_k,$$
where each $J_i$ is a supercone in $M$. Let us call a ($k$-)cone $C=h(\cal J)$ a \emph{($k$-)refined cone} if $\cal J$ is refined.
\end{defn}

Our result is the following. 


\begin{prop}\label{main2}
Assume $\cal M$ expands a real closed field. 
Then there is a supercone in $M^2$ which  contains no $2$-refined cone. In particular, it is not a finite union of refined cones.
\end{prop}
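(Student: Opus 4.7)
The plan is to construct one explicit supercone and rule out any $2$-refined cone inside it by a dimension/density argument that relies crucially on the field structure. Take
\[
E = \{(a,b) \in M^2 : \exists\, (p,q) \in P \times P,\ b = ap + q\},
\]
and set $J = (0,1)^2 \setminus E$. To check $J$ is a supercone: $\pi_1(J) = (0,1)$ is trivially co-small, and for each $a \in (0,1)$ the fiber $J_a = (0,1) \setminus (aP + P)$ has small complement because $aP + P$ is the image of the small set $P \times P$ under the $\cal L$-definable map $(p,q) \mapsto ap + q$, hence small.

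For the non-existence of a refined cone, suppose for contradiction that $h(\cal J') \subseteq J$ is a $2$-refined cone with $\cal J' = \bigcup_{g \in S} \{g\} \times J_{g,1} \times J_{g,2}$. Since each slice $h(g, J_{g,1} \times J_{g,2})$ lies in $J$, I fix any $g_0 \in S$ and write $\bar h = h(g_0,-) : V = I_1 \times I_2 \to M^2$ (continuous, injective, $\cal L$-definable over parameters including $g_0$), $J_i = J_{g_0,i}$, $S_i = I_i \setminus J_i$. Writing $U' = \bar h(V)$ (open in $M^2$), $L_{p,q}$ for the line $b = ap + q$, and $W = \{(p,q) \in M^2 : L_{p,q} \cap U' \ne \emptyset\}$ (open), the inclusion $\bar h(J_1 \times J_2) \subseteq J$ translates to
\[
\bigcup_{(p,q) \in W \cap P^2} \bigl(L_{p,q} \cap U'\bigr) \;\subseteq\; \bigcup_{a \in S_1} \bar h(\{a\} \times I_2) \;\cup\; \bigcup_{b \in S_2} \bar h(I_1 \times \{b\}).
\]

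The heart of the argument is a rigidity claim: for each $(p,q) \in W \cap P^2$, the connected $1$-dimensional curve $L_{p,q} \cap U'$ must coincide with a single fiber $\bar h(\{a\} \times I_2)$ for some $a \in S_1$ or $\bar h(I_1 \times \{b\})$ for some $b \in S_2$. Injectivity of $\bar h$ makes the two families of fibers pairwise disjoint foliations of $U'$, and a curve transverse to both foliations intersects the small-indexed union above only in a $0$-dimensional set; thus $L_{p,q} \cap U'$ lies inside a single fiber. The RCF hypothesis then enters via semialgebraic rigidity: an $\cal L$-definable curve agreeing with a line on any open piece is that line, so the fiber must actually equal $L_{p,q}$ on the relevant domain. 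Setting
\[
A = \{(p,q) : \bar h(\{a\}\times I_2) = L_{p,q} \text{ for some } a \in I_1\}, \quad B = \{(p,q) : \bar h(I_1\times\{b\}) = L_{p,q} \text{ for some } b \in I_2\},
\]
each $a$ determines at most one $(p,q)$, so both $A$ and $B$ are $\cal L$-definable of dimension $\le 1$. We conclude $W \cap P^2 \subseteq A \cup B$.

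But $A \cup B$ has closure of topological dimension $\le 1$, hence is nowhere dense in $M^2$, while $P$ is dense in $M$ (as in all examples of tame expansions covered by the paper's setup), so $P^2$ is dense in $M^2$ and $W \cap P^2$ is dense in the open set $W$. This yields $W \subseteq cl(W \cap P^2) \subseteq cl(A \cup B)$, contradicting that $cl(A \cup B)$ has empty interior. The main obstacle is the rigidity step above; this is exactly where the field structure is needed (and is where the analogous argument would fail in the linear case), so I would implement it using o-minimal cell decomposition to reduce to the case where the two foliations are locally $C^1$-diffeomorphic images of a rectangle, and then argue by transversality.
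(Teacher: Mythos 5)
Your supercone is essentially the paper's ($J_a$ = complement of the $P^2$-indexed family of lines $b=ap+q$), but your argument that it contains no $2$-refined cone takes a genuinely different route. The paper's proof is a short $\dcl$-dimension count: choose a generic line $b=g+ha$ with $g,h\in P$ and $\dim(g,h,a)=3$ meeting $f(U)$, note that its preimage $(p,q)$ must have one coordinate lying in a small (hence $P$-internal) set, and then recover $h$, and so $g$, from $(p,q)$ by differentiating the identity $f_2(x,q)=g+hf_1(x,q)$, contradicting $\dim(p,q)\le 2$. You replace this with a topological argument: every line $L_{p,q}$, $(p,q)\in P^2$, meeting $\bar h(V)$ must run along leaves of the two coordinate foliations, of which there are only $1$-dimensionally many, while the admissible $(p,q)$ form a dense subset of a $2$-dimensional open set. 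This is an attractive alternative; note, however, that it uses density of $P$, which the standing hypotheses (I)--(III) of Section 2 do not formally include (though the abstract assumes it and every intended example satisfies it), whereas the paper's proof uses only saturation and the smallness machinery.

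The gap is in the rigidity step, and it is twofold. First, the conclusion that $L_{p,q}\cap U'$ lies inside a \emph{single} fiber does not follow from the transversality remark and is more than is true: what one can prove is that $\gamma=\bar h^{-1}(L_{p,q}\cap U')$, being a definable curve contained in $(S_1\times I_2)\cup(I_1\times S_2)$, decomposes into finitely many vertical and horizontal segments. (The actual content here, which you should write out, is that a monotone arc with both coordinates non-constant cannot be covered by that union, because the preimage of a small set under an $\cal L$-definable injection is small and a finite union of small sets cannot cover an interval.) So the line may be a finite staircase of pieces of leaves, switching between the two foliations. Second, the claim that an $\cal L$-definable curve agreeing with a line on an open piece \emph{is} that line is false: there is no analytic continuation for definable functions (e.g.\ $\max(x,2x)$ agrees with $y=x$ on $x<0$), so your sets $A$ and $B$ as defined need not contain $W\cap P^2$. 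Both defects are repairable: redefine $A$ (resp.\ $B$) as the set of $(p,q)$ such that $L_{p,q}$ meets some leaf $\bar h(\{a\}\times I_2)$ (resp.\ $\bar h(I_1\times\{b\})$) in an infinite set; for each fixed leaf only finitely many lines qualify, since a $1$-dimensional definable set contains arcs of only finitely many pairwise almost disjoint lines, so $A\cup B$ is still $\cal L$-definable of dimension $\le 1$ and your density contradiction goes through. With these corrections the argument is sound, though longer than the paper's.
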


\begin{proof} 
\footnote{The proof  is based on an idea suggested by P. Hieronymi.}
The `in particular' clause follows from \cite[Corollaries 4.26 and 4.27]{egh}.
Now, for every $a\in M$, let
$$J_a=M\sm (P+aP)$$
and define $\cal J=\bigcup_{a\in M} \{a\}\times J_a$. Towards a contradiction, assume that  $\cal J$ contains a $2$-cone. That is, there are supercones $J_1, J_2\sub M$, an open cell $U\sub M^2$ with $cl(J_1\times J_2)=cl(U)$, and an $\cal L$-definable continuous and injective map $f:U\to M^2$, such that $C=f(J_1\times J_2)\sub \cal J$. We write $X=f(U)$, and for each $a\in M$, $X_a\sub M$ for the fiber of $X$ above $a$. Suppose $C$ is $A$-definable.

By saturation, there is $a\in M$ which is $\dcl$-independent over $A \cup P$, and further $g, h\in P$ which are $\dcl$-independent over $a$. So $$\dim(g,h, a)=3.$$
By assumption, there are $(p, q)\in U\sm (J_1\times J_2),$ such that
$$f(p,q)=(a, g+ha).$$
Observe  that $a\in \dcl(p,q)$. Also, one of $p,q$ must be in $\dcl(A P)$. Indeed,  we have $p\not\in J_1$ or $q\not \in J_2$. If, say, the former holds,  then $p\in\pi(U)\sm J_1$. Since the last set is $A$-definable and small, we obtain by \cite[Lemma 3.11]{egh}, that $p\in \dcl(AP)$.  

We may now assume that $p\in\dcl(AP)$. If we write $f=(f_1, f_2)$, we obtain
\begin{equation}
  f_2(p, q)=g+h f_1(p, q).\label{g}
\end{equation}
Since $a$ is $\dcl$-independent over $A\cup P$, there must be an open interval $I\sub M$ of $p$, such that for every $x\in I$,
$$f_2(x,q)=g+h f_1(x,q).$$
Viewing both sides of the equation as functions in the variable $f_1(x, q)$, and taking their derivatives with respect to it, we obtain:
$$\frac{\partial f_2(x,q)}{\partial f_1(x,q)}= f_1(x,q)+ h.$$
Evaluated at $p$, the last equality gives $h\in \dcl(p,q)$. By (\ref{g}), also $g\in \dcl(p,q)$.  All together, we have proved that $g, h, a\in\dcl(p,q)$. It follows that
$$\dim(g,h, a) \le \dim(p,q)\le 2,$$
a contradiction.
\end{proof}

\end{document}